\numberwithin{equation}{section}
\theoremstyle{plain}
\newtheorem{thm}{Theorem}[section]
\theoremstyle{definition}
\newtheorem{exam}[thm]{Example}
\newtheorem{Lemma}[thm]{Lemma}
\newtheorem{pro}{Proof}
\begin{document}
\title{Moment Asymptotic Expansions of the Wavelet Transforms}
\author[R.S.Pathak and Ashish Pathak]{ R S Pathak* and Ashish Pathak** \\
 *DST Center for Interdisciplinary Mathematica Sciences \\
  Banaras Hindu University , Varanasi, india-221005\\
**Department of Mathematics and Statistics\\
 Dr. Harisingh Gour Central University \\
    Sagar-470003, India.}
\date{}
\keywords{Asymptotic expansion, Wavelet transform,Distribution.}
\subjclass[2000]{42C40; 34E05}
\thanks{$*$E-mail: ramshankarpathak@yahoo.co.in $^{**}$E-mail: pathak\_maths@yahoo.com}
\begin{abstract} Using distribution theory we present the moment asymptotic expansion of continuous wavelet transform in different distributional spaces for large and small values of dilation parameter $a$. We also obtain asymptotic expansions for certain wavelet transform.
\end{abstract}
\maketitle
\section{Introduction} 
In past few decades their were many mathematician who has done great work in the field of asymptotic expansion like Wong 1979 \cite{wong} using Mellin transform technique has obtained asymptotic expansion of classical integral transform and after that Pathak $\&$ Pathak 2009 \cite{rspathak,Pathak,APathak,Aapathak} has found the asymptotic expansion of continuous wavelet transform for large and small values of dilation and translation parameters. Estrada $\&$ Kanwal 1990 \cite{kanwal} has obtained the asymptotic expansion of generalized functions on different  spaces of test functions. In present paper using  Estrada $\&$ Kanwal technique we have obtained the asymptotic expansion of wavelet transform in different distributional spaces.\\
\hspace*{2 mm} The continuous wavelet transform of $f$ with respect to wavelet $\psi$ is defined by
\begin{eqnarray}
\label{1}
\left(  W_\psi f \right)  (a,b)= \frac{1}{\sqrt{a}} \int_{-\infty}^\infty f(x) \overline{\psi \left(  \frac{x-b}{a}\right)  } dx,\,\, b \in \mathbb{R}, a>0,
\end{eqnarray}
provided the integral exists \cite{rspathak} \\
Now, from (\ref{1}) we get
\begin{eqnarray}
\label{2}
\left(  W_\psi f \right)  (a,b) \nonumber &=& \sqrt{a} \int_{-\infty}^\infty f(x) \overline{\psi \left(  x- \frac{b}{a} \right) } dx \\
&=& \sqrt{a} \bigg\langle f(ax), \psi\left( x- \frac{b}{a}\right) \bigg\rangle
\end{eqnarray}
 This paper is arranged in following manner. In section second, third , fourth and fifth we drive the asymptotic expansion in the distributional spaces $\mathscr{E}'(\mathbb{R}) $, $ \mathscr{P}'(\mathbb{R})$,$\mathcal{O}'_\gamma (\mathbb{R})$,$\mathcal{O}'_c(\mathbb{R})$ and $\mathcal{O}'_M(\mathbb{R})$ respectively, studied in \cite{kanwal}
\section{The moment asymptotic expansion of $\left(  W_\psi f \right)  (a,b) $ as $a\rightarrow \infty $ in the space $\mathscr{E}'(\mathbb{R})$ for given $b$ }
The space $\mathscr{E}(\mathbb{R})$ is the space of all smooth functions on $\mathbb{R}$ and it's dual space $\mathscr{E}'(\mathbb{R})$ , the space of distribution with compact support.   If  $\psi \in \mathscr{E}(\mathbb{R})$, then $ \psi \left(  x- \frac{b}{a} \right) \in \mathscr{E}(\mathbb{R})$. So consider the seminorms \\
\begin{itemize}
\item[Case 1] For $ b\geq 0 $
\begin{eqnarray}
\label{3}
\bigg\Vert\psi \left(  x- \frac{b}{a} \right) \bigg\Vert_{\alpha,M}=Max\bigg\lbrace \vert D^\alpha \psi \left(  x- \frac{b}{a} \right)\vert : \frac{b}{a}-M<x<b+M  \bigg\rbrace
\end{eqnarray}
\end{itemize}
\begin{itemize}
\item[Case 2] For $ b < 0  $
\begin{eqnarray}
\label{4}
\bigg\Vert\psi \left(  x- \frac{b}{a} \right) \bigg\Vert_{\alpha,M}=Max\bigg\lbrace \vert D^\alpha \psi \left(  x- \frac{b}{a} \right)\vert :  b-M<x< \frac{b}{a}+M  \bigg\rbrace
\end{eqnarray}
\end{itemize}
for $\alpha \in \mathbb{N} $ and $M>0 $, these seminorm generate the topology of $\mathscr{E}(\mathbb{R})$. If $q=0,1,2,3, ...$, we set
\begin{eqnarray}
\label{5}
X_q=\lbrace \psi \in \mathscr{E}(\mathbb{R}) : D^\alpha \psi \left( 0 \right)=0 \,\, for \,\, \alpha < q   \rbrace
\end{eqnarray}
\begin{Lemma}
\label{6}
Let $\psi \in X_q $, then for every $\alpha \in \mathbb{N} $ and $ M>0 $,
\begin{eqnarray}
\label{7}
\bigg\Vert \psi \left(\frac{x-b}{a} \right)\bigg\Vert_{\alpha,M}=O\left( \frac{1}{a^q}\right) \,\,\, as \,\,\, a \rightarrow \infty
\end{eqnarray}
\begin{pro}
\label{7}
For $ b\geq 0 $. For $\psi \in X_q $ we can find a constant $K$ such that
\begin{eqnarray}
\label{8}
\bigg\vert \psi \left(  x- \frac{b}{a} \right)\bigg\vert  \leq K \bigg\vert x- \frac{b}{a} \bigg\vert^q
\,\, ,\frac{b}{a}-1<x<b+1.
\end{eqnarray}
Therefore, if $a>M$ we obtain
\begin{eqnarray}
\label{9}
\bigg\Vert \psi \left( \frac{x-b}{a} \right)\bigg\Vert_{0,M}\nonumber &=& Max\bigg\lbrace \bigg\vert \psi \left( \frac{x-b}{a} \right)\bigg\vert : \frac{b}{a}-M<x<b+M  \bigg\rbrace \\ & \leq & O\left( \frac{M}{a^q}\right).
\end{eqnarray}
If $\alpha \leq q $ and $\psi \in X_q $ then $ D^\alpha \psi \in X_{q-\alpha}$ and thus
\begin{eqnarray*}
\bigg\Vert \psi \left(\frac{x-b}{a} \right)\bigg\Vert_{\alpha,M}  &=& \bigg\Vert a^\alpha D^\alpha \psi \left(  \frac{x-b}{a} \right)\bigg\Vert_{0,M} \\ &=& \frac{1}{a^\alpha} O \left( \frac{1}{a^{q-\alpha}}\right) \\ &=& O \left( \frac{1}{a^{q}}\right)
\end{eqnarray*}
Similarly by using (\ref{4}) we can prove that
\begin{eqnarray*}
\bigg\Vert \psi \left(\frac{x-b}{a} \right)\bigg\Vert_{\alpha,M}  = O \left( \frac{1}{a^{q}}\right)  \;\; for \;\; b<0.
\end{eqnarray*}
\end{pro}
\end{Lemma}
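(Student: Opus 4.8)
The plan is to push the dilation through the derivative: for any $\alpha$,
\[
D^\alpha_x\Bigl[\psi\Bigl(\tfrac{x-b}{a}\Bigr)\Bigr]=\frac{1}{a^\alpha}\,\psi^{(\alpha)}\Bigl(\tfrac{x-b}{a}\Bigr),
\]
so the seminorm \eqref{3} (resp.\ \eqref{4}) is exactly $a^{-\alpha}$ times the maximum of $\bigl|\psi^{(\alpha)}\bigl(\tfrac{x-b}{a}\bigr)\bigr|$ over the indicated $x$-interval. Everything then rests on two elementary facts: (i) for $a\ge\max(1,M)$ that $x$-interval is swept into a single \emph{fixed} compact interval, on which moreover $\bigl|\tfrac{x-b}{a}\bigr|\le\tfrac{M+|b|}{a}$; and (ii) a smooth function all of whose derivatives up to order $r-1$ vanish at the origin is $O(|t|^{r})$ near $0$, by Taylor's theorem with Lagrange remainder.

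For Case~1 ($b\ge0$) I would argue as follows. If $\tfrac{b}{a}-M<x<b+M$ then $-(M+b)<x-b<M$, so $\bigl|\tfrac{x-b}{a}\bigr|\le\tfrac{M+b}{a}$ and in particular $\tfrac{x-b}{a}\in[-(M+b),M+b]$ for all $a\ge1$. When $\alpha\le q$, the definition of $X_q$ gives $D^\alpha\psi\in X_{q-\alpha}$, so fact (ii) supplies a constant $K_\alpha=K_\alpha(M,b,\psi)$ with $|\psi^{(\alpha)}(t)|\le K_\alpha|t|^{\,q-\alpha}$ for $|t|\le M+b$; inserting $t=\tfrac{x-b}{a}$ and the prefactor $a^{-\alpha}$ gives
\[
\Bigl\Vert\psi\Bigl(\tfrac{x-b}{a}\Bigr)\Bigr\Vert_{\alpha,M}\le\frac{1}{a^\alpha}\,K_\alpha\Bigl(\frac{M+b}{a}\Bigr)^{q-\alpha}=K_\alpha(M+b)^{q-\alpha}\,\frac{1}{a^{q}}=O\!\left(\frac{1}{a^q}\right).
\]
When $\alpha>q$, $\psi^{(\alpha)}$ is merely continuous, hence bounded by some $C_\alpha$ on $[-(M+b),M+b]$, so $\bigl\Vert\psi(\tfrac{x-b}{a})\bigr\Vert_{\alpha,M}\le C_\alpha a^{-\alpha}\le C_\alpha a^{-q}$ for $a\ge1$; again $O(a^{-q})$. (In particular this contains \eqref{9}, the case $\alpha=0$, and extends it to every $\alpha\in\mathbb{N}$.) Case~2 ($b<0$) is the mirror image: for $b-M<x<\tfrac{b}{a}+M$ one gets $-M<x-b<M+|b|$, hence $\tfrac{x-b}{a}\in[-(M+|b|),M+|b|]$ with $\bigl|\tfrac{x-b}{a}\bigr|\le\tfrac{M+|b|}{a}$ for $a\ge1$, and the identical split on $\alpha$ yields $O(a^{-q})$.

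I do not anticipate any real difficulty. The only points that need care are bookkeeping ones: checking that the $a$-dependent $x$-intervals in \eqref{3}--\eqref{4} contract into one compact neighbourhood of the origin uniformly in $a\ge\max(1,M)$, so that a single Taylor constant serves for all large $a$; and not overlooking the range $\alpha>q$ (where the decay is in fact the stronger $a^{-\alpha}$), since the statement is asserted for every $\alpha\in\mathbb{N}$.
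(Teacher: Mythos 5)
Your proof is correct and follows essentially the same route as the paper's: the chain rule produces the factor $a^{-\alpha}$, the shrinking of the admissible $x$-interval forces $\bigl|\tfrac{x-b}{a}\bigr|=O(1/a)$, and Taylor's theorem applied to $D^{\alpha}\psi\in X_{q-\alpha}$ supplies the remaining $a^{-(q-\alpha)}$. You are in fact slightly more complete than the paper, which writes out only the case $\alpha\le q$ and leaves the uniformity of the Taylor constant implicit; your separate treatment of $\alpha>q$ (plain boundedness of $\psi^{(\alpha)}$ on a fixed compact set, giving the even stronger $O(a^{-\alpha})$) closes that small gap.
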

Now, by using Lemma \ref{6} we obtain the following theorem
\begin{thm}
\label{1.1}
Let wavelet $\psi \in \mathscr{E}(\mathbb{R}) , f \in \mathscr{E}'(\mathbb{R})$ and $\mu_\alpha = \left\langle f, x^\alpha \right\rangle $ be its moment sequence . Then for a fixed $b$ the moment asymptotic expansion of wavelet transform is
\begin{eqnarray}
\label{10}
\sqrt{a}\bigg\langle f(ax), \psi\left(  x- \frac{b}{a}\right) \bigg\rangle= \sum_{\alpha =0}^{N} \frac{\mu_\alpha D^\alpha\psi(-b/a)}{\alpha ! \,\, a^{\alpha+ 1/2}} + O\left( \frac{1}{a^{N+1/2}}\right) \;\; as \;\; a \rightarrow \infty .
\end{eqnarray}
\end{thm}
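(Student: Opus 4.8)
The plan is to reduce the statement to a Taylor expansion of the dilated, shifted wavelet and then to control the remainder by the compact support and finite order of $f$. First I would remove the outer dilation by the elementary identity $\langle f(ax),\varphi(x)\rangle=\tfrac1a\langle f(y),\varphi(y/a)\rangle$, valid for $a>0$, applied with $\varphi(x)=\psi(x-b/a)$; this gives
\begin{equation*}
\sqrt a\,\bigg\langle f(ax),\psi\Big(x-\frac ba\Big)\bigg\rangle=\frac1{\sqrt a}\,\bigg\langle f(y),\psi\Big(\frac{y-b}{a}\Big)\bigg\rangle ,
\end{equation*}
which is the wavelet transform $(W_\psi f)(a,b)$ of (\ref{1}), the complex conjugation being suppressed exactly as in (\ref{2}). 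So it suffices to expand the right-hand side as $a\to\infty$.

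Next, viewing $\psi\big(\tfrac{y-b}{a}\big)=\psi\big(-\tfrac ba+\tfrac ya\big)$ as a function of $y$, I would apply Taylor's theorem about the base point $-b/a$ with increment $y/a$, to order $N$, with the integral form of the remainder:
\begin{equation*}
\psi\Big(\frac{y-b}{a}\Big)=\sum_{\alpha=0}^{N}\frac{D^{\alpha}\psi(-b/a)}{\alpha!}\Big(\frac ya\Big)^{\alpha}+R_N(y,a),
\end{equation*}
where
\begin{equation*}
R_N(y,a)=\frac{y^{N+1}}{a^{N+1}\,N!}\int_0^1(1-t)^N\,D^{N+1}\psi\Big(\frac{-b+ty}{a}\Big)\,dt .
\end{equation*}
Applying $f$ in the variable $y$ and using $\mu_\alpha=\langle f,y^{\alpha}\rangle$, the finite sum contributes $\sum_{\alpha=0}^{N}\mu_\alpha D^{\alpha}\psi(-b/a)/(\alpha!\,a^{\alpha})$; dividing by $\sqrt a$ produces precisely the asserted main terms. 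It therefore remains only to prove $\langle f(y),R_N(y,a)\rangle=O(a^{-(N+1)})$, since division by $\sqrt a$ then yields the stated $O(a^{-(N+1/2)})$ error.

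For that estimate I would use that $f\in\mathscr E'(\mathbb R)$ has a fixed compact support $K$ and a finite order $m$, so $|\langle f,\phi\rangle|\le C\max_{0\le j\le m}\sup_{y\in K}|D_y^{j}\phi(y)|$ for every $\phi\in\mathscr E(\mathbb R)$. Differentiating $R_N(\cdot,a)$ up to $m$ times under the integral sign, every term produced is a product of a bounded power of $y$, the fixed factor $a^{-(N+1)}$, possibly extra factors $a^{-1}$ arising whenever a $y$-derivative falls on $\psi$, and a derivative $D^{N+1+k}\psi$ evaluated at $\tfrac{-b+ty}{a}$; for $y\in K$ and $t\in[0,1]$ these arguments stay in one fixed bounded set as $a\to\infty$, so all such $\psi$-derivatives are uniformly bounded there. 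Hence $\sup_{y\in K}|D_y^{j}R_N(y,a)|=O(a^{-(N+1)})$ for each $j\le m$ — this is the same $O(a^{-q})$ behaviour with $q=N+1$ that Lemma \ref{6} records for functions vanishing to order $q$ at the origin — and therefore $|\langle f(y),R_N(y,a)\rangle|=O(a^{-(N+1)})$, completing the proof.

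The step I expect to be the main obstacle is this remainder control: because the ``test function'' $\psi(x-b/a)$ itself depends on $a$, one cannot simply invoke the classical moment asymptotic expansion of Estrada--Kanwal; the decay has to be extracted by hand from the Taylor remainder, and one must check that differentiating in $y$ (forced by the finite order of $f$) never destroys the gained power $a^{-(N+1)}$. As a side remark, re-expanding each $D^{\alpha}\psi(-b/a)$ in powers of $1/a$ about $0$ and regrouping by the binomial theorem turns the stated expansion into the equivalent form with coefficients $D^{k}\psi(0)\,\langle f,(y-b)^{k}\rangle/(k!\,a^{k+1/2})$, which may look more familiar.
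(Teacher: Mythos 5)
Your proposal is correct and follows essentially the same route as the paper: expand $\psi$ about $-b/a$ to order $N$ (your Taylor polynomial in $y/a$ is exactly the paper's $P_N(x,b/a)$ after the change of variables), identify the main terms with the moments $\mu_\alpha$, and control the remainder using the compact support and finite order of $f\in\mathscr{E}'(\mathbb{R})$ — which is precisely what the paper packages into Lemma \ref{6} via the subspace $X_{N+1}$. Note only that your explicit integral-remainder bound actually gives $\frac{1}{\sqrt a}\,O(a^{-(N+1)})=O(a^{-(N+3/2)})$, so the phrase ``division by $\sqrt a$ yields the stated $O(a^{-(N+1/2)})$'' contains a harmless arithmetic slip in the safe direction: you have proved a slightly sharper error estimate than the theorem claims.
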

\begin{pro}
Let $P_N(x,b/a)=\sum_{\alpha =0}^{N} \frac{D^\alpha\psi(-b/a)}{\alpha !} x^\alpha $ be the polynomial of order $N$ of the function $\psi\left( x- \frac{b}{a}\right) $. Then we have
\begin{eqnarray*}
\label{11}
\bigg\langle f(ax), \psi\left(  x- \frac{b}{a}\right) \bigg\rangle & =&  \bigg\langle f(ax), P_N(x,b/a) \bigg\rangle + \bigg\langle f(ax), \psi\left( x-\frac{b}{a} \right)-P_N(x,b/a) \bigg\rangle \\ &=& \sum_{\alpha=0}^{N} \frac{\mu_\alpha D^{\alpha} \psi (-b/a)}{\alpha ! a^{\alpha+1}}+ R_N (a)
\end{eqnarray*}
where the remainder $R_N (a)$ is given as $R_N (a)= \bigg\langle f(ax), \psi\left( x-\frac{b}{a} \right)-P_N(x,b/a) \bigg\rangle$.\\
Since $\psi\left( x-\frac{b}{a} \right)-P_N(x,b/a)  \ in X_{N+1}$ we obtain
\begin{eqnarray*}
|R_N(a)| & = & \left| \bigg\langle f(ax), \psi\left( x-\frac{b}{a} \right)-P_N(x,b/a) \right| \\ &=& \frac{L}{a} \sum_{\alpha=0}^{q} \| \psi_{N} \left( \frac{x-b}{a}\right) \|_{\alpha,M} \\ &=& O\left(\frac{1}{a^{N+1}}\right)
\end{eqnarray*}
where the existence of $ L, q $ and $M$ is guaranteed by the continuity of $f$. Hence we get the required asymptotic expansion \ref{10}.
\end{pro}
\begin{exam}
In this example we choose $\psi$ to be Mexican-Hat wavelet and derive the asymptotic expansion of Mexican-Hat wavelet transform by using Theorem\ref{1.1}. The Mexican-Hat wavelet is given by \cite{rspathak}
\begin{eqnarray}
\psi(x)= (1-x^2) e^{- \frac{x^2}{2}} \in \mathscr{E}(\mathbb{R})
\end{eqnarray}
Let
\begin{eqnarray*}
P_2(x,b/a)=\frac{e^{-\frac{b^2}{2a^2}}}{a^2} \left( (a^2-b^2)+ \frac{b(3a^2-b^2)}{a}x+\frac{(6a^2b^2-3a^4-b^4)}{2a^2}x^2 \right).
\end{eqnarray*}
Now, using Theorem\ref{1.1} we get the asymptotic expansion of Mexican-Hat wavelet transform
\begin{eqnarray*}
\sqrt{a}\bigg\langle f(ax), \psi\left(  x- \frac{b}{a}\right) \bigg\rangle &=& \frac{e^{-\frac{b^2}{2a^2}}}{a^2} \bigg( \frac{(a^2-b^2)}{\sqrt{a}}  \mu_0+ \frac{b(3a^2-b^2)}{a^{3/2}}\mu_1  \\ &+& \frac{(6a^2b^2-3a^4-b^4)}{2a^{5/2}}\mu_2 \bigg) + O \left(\frac{1}{a^{9/2}}\right) \,\,\, as\,\,\, a \rightarrow \infty
\end{eqnarray*}
where $ \mu_{i}= \langle f,x^i\rangle , i=0,1,2$.
\end{exam}
\section{The moment asymptotic expansion of $\left(  W_\psi f \right)  (a,b) $ for large and small values  of $a$ in the space  $\mathscr{P}'(\mathbb{R})$ for a given $b$ }
Case 1. Let $ \psi \in \mathscr{P}(\mathbb{R})$. \\ We now consider the moment asymptotic expansion in the space $\mathscr{P}'(\mathbb{R})$ of distributions of "less than exponential growth". The space $\mathscr{P}(\mathbb{R})$ consist of those smooth functions $\phi(x)$ that satisfy
\begin{eqnarray*}
lim_{x \rightarrow \infty} e^{-\gamma |x|} D^{\beta} \phi(x)= 0 \,\,\, for \,\,\, \gamma >0 \,\, and \,\, each \,\, \beta \in \mathbb{N},
\end{eqnarray*}
with seminorms
\begin{eqnarray*}
\| \phi(x) \|_{\gamma, \beta} = sup \,\, \left\{ | \,\,\, e^{-\gamma |x|} \,\, D^{\beta} \phi(x) \,\, | : x \in  \mathbb{R} \right\}.
\end{eqnarray*}
Let wavelet $\psi(x) \in \mathscr{P}(\mathbb{R}) $. Then
\begin{eqnarray*}
\| \psi(x) \|_{\,\, \gamma,\,\, \beta, \,\, \frac{b}{a}} &=& sup \left\{ \left| e^{-\gamma |x|} D^{\beta} \psi \left( x-\frac{b}{a}\right) \right| : x \in  \mathbb{R} \right\} \\ &=& sup \left\{ \left| e^{-\gamma |x- \frac{b}{a}|} D^{\beta} \psi \left( x-\frac{b}{a}\right)  \frac{e^{-\gamma |x|}}{e^{-\gamma |x- \frac{b}{a}|}} \right| : x \in  \mathbb{R} \right\} \\ &=& \bigg\| \psi \left( x-\frac{b}{a}\right) \bigg\|_{\,\, \gamma,\,\, \beta} \,\, A \left( x,b/a\right),
\end{eqnarray*}
where
\begin{eqnarray*}
A \left( x,b/a\right) &=& sup \left\{ \left| \frac{e^{-\gamma |x|}}{e^{-\gamma |x- \frac{b}{a}|}} \right| : x \in  \mathbb{R} \right\}
\\ & \leq & e^{\gamma \left| \frac{b}{a} \right|} < \infty,
\end{eqnarray*}
for a given $ \gamma > 0$ and $b \in \mathbb{R}$. \\
So $\| \psi \left( x \right) \|_{\,\, \gamma,\,\, \beta, \,\,\frac{b}{a}}$ is also seminorm on $ \mathscr{P}(\mathbb{R}) $ for $\gamma >0,  \,\, \beta \in \mathbb{N}$ and for a given $b \in \mathbb{R} $. Therefore these seminorm generate the topology of the space $\mathscr{P}(\mathbb{R}) $. If
\begin{eqnarray*}
X_q = \lbrace \psi \in \mathscr{P}(\mathbb{R}) : D^\alpha \psi(0)=0, \,\, for \,\,\, \alpha < q \rbrace.
\end{eqnarray*}
Therefore for any $\gamma >0 $ we can find a constant $C$ such that
\begin{eqnarray*}
  \left| \psi \left( x-\frac{b}{a}\right) \right| \leq C \left| x- \frac{b}{a} \right|^{q}  e^{\frac{\gamma |x|}{2}} e^{\gamma \left| \frac{b}{a} \right|}
\end{eqnarray*}
if $a>1$
\begin{eqnarray*}
  e^{-\gamma |x|} \left| \psi \left( \frac{x-b}{a}\right) \right| \leq C \left| \frac{x-b}{a} \right|^{q}  e^{-\frac{\gamma |x|}{2}} e^{\gamma \left| \frac{b}{a} \right|} \leq  \frac{C_1}{a^q}
\end{eqnarray*}
and thus
\begin{eqnarray}
\label{3.1}
  \bigg\Vert \psi \left( \frac{x}{a} \right) \bigg\Vert_{\gamma,\,\, 0,\,\, \frac{b}{a}}= O \left( \frac{1}{a^q}\right) \,\, as \,\, a \rightarrow \infty,\,\, \psi \in X_q.
\end{eqnarray}
Hence using above equation we get
\begin{eqnarray}
\label{3.2}
  \bigg\Vert \psi \left( \frac{x}{a} \right) \bigg\Vert_{\gamma,\,\, \beta, \,\,  \frac{b}{a}}= O \left( \frac{1}{a^q}\right) \,\, as \,\, a \rightarrow \infty.
\end{eqnarray}
Using (\ref{3.2}) we obtain the following theorem
\begin{thm}
\label{33.3}
Let $ \psi \in \mathscr{P}(\mathbb{R}) , f \in \mathscr{P}'(\mathbb{R})$  and $\mu_{\alpha}=\langle f, x^\alpha \rangle $ be its moment sequence. Then for a fixed $b$ the asymptotic expansion of wavelet transform is
\begin{eqnarray}
\label{3.3}
\sqrt{a}\bigg\langle f(ax), \psi\left(  x- \frac{b}{a}\right) \bigg\rangle  \sim  \sum_{\alpha=0}^{\infty} \frac{\mu_\alpha D^\alpha \psi(-b/a)}{\alpha ! \,\, a^{\alpha +1/2}} \,\,\, as \,\,\, a \rightarrow \infty.
\end{eqnarray}
\end{thm}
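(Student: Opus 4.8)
\medskip
\noindent\emph{Sketch of proof.} The plan is to run the argument of Theorem~\ref{1.1} essentially word for word, with the seminorm estimate \eqref{3.2} playing the role that Lemma~\ref{6} played there; the one genuinely new point is that \eqref{3.2} is available for \emph{every} order $q$, so there is nothing forcing us to stop the Taylor expansion at a finite stage, and this is exactly what the asymptotic relation $\sim$ in \eqref{3.3} encodes.

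Concretely, fix $N\in\mathbb{N}$ and, as in Theorem~\ref{1.1}, let $P_N(x,b/a)=\sum_{\alpha=0}^{N}\frac{D^\alpha\psi(-b/a)}{\alpha!}\,x^\alpha$ be the $N$-th order Taylor polynomial at $x=0$ of the smooth map $x\mapsto\psi\!\left(x-\frac{b}{a}\right)$ and set $\psi_N(x,b/a)=\psi\!\left(x-\frac{b}{a}\right)-P_N(x,b/a)$. Since every polynomial has less than exponential growth and $\mathscr{P}(\mathbb{R})$ is a vector space, $\psi_N(\cdot,b/a)\in\mathscr{P}(\mathbb{R})$, while $D^\alpha\psi_N(0,b/a)=0$ for $\alpha\le N$ shows $\psi_N(\cdot,b/a)\in X_{N+1}$. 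Splitting the bracket and using $\langle f(ax),x^\alpha\rangle=\mu_\alpha/a^{\alpha+1}$ (the dilation $t=ax$ supplying the factor $1/a$), one gets
\[
\bigg\langle f(ax),\psi\!\left(x-\frac{b}{a}\right)\bigg\rangle=\sum_{\alpha=0}^{N}\frac{\mu_\alpha\,D^\alpha\psi(-b/a)}{\alpha!\,a^{\alpha+1}}+R_N(a),\qquad R_N(a)=\bigg\langle f(ax),\psi_N\!\left(x,\frac{b}{a}\right)\bigg\rangle .
\]
Now I would invoke the continuity of $f$ on $\mathscr{P}(\mathbb{R})$ to get a constant $L$ and finitely many indices $\gamma_j>0$, $\beta_j\in\mathbb{N}$ with $|R_N(a)|\le\frac{L}{a}\sum_j\big\|\psi_N(\cdot,b/a)\big\|_{\gamma_j,\beta_j,b/a}$, and apply \eqref{3.2} with $q=N+1$ to each summand (legitimate precisely because $\psi_N(\cdot,b/a)\in X_{N+1}$); multiplying through by $\sqrt{a}$ yields the $N$-term version of \eqref{10}, namely $\sqrt{a}\langle f(ax),\psi(x-b/a)\rangle=\sum_{\alpha=0}^{N}\frac{\mu_\alpha D^\alpha\psi(-b/a)}{\alpha!\,a^{\alpha+1/2}}+O\!\left(a^{-(N+1/2)}\right)$. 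As $N\in\mathbb{N}$ is arbitrary, this is precisely the statement that the series on the right of \eqref{3.3} is an asymptotic expansion of the left-hand side as $a\to\infty$.

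The step I expect to cause the most trouble is making the remainder bound uniform in $a$, and it is here that the work already done before \eqref{3.1} is essential. First, one must pass from the purely local Taylor estimate $|\psi(x-b/a)-P_N(x,b/a)|\le C|x-b/a|^{N+1}$ to a bound on the full-line, exponentially weighted seminorm $\|\psi_N(\cdot,b/a)\|_{\gamma,\beta,b/a}$: this requires combining that estimate with the ``less than exponential growth'' of $\psi$ and its derivatives and splitting off a factor $e^{\gamma|x|/2}$ which is then absorbed into the weight $e^{-\gamma|x|}$, exactly as in the chain of inequalities preceding \eqref{3.1}. Second, the constant $L$ and the finite family $\{(\gamma_j,\beta_j)\}$ delivered by the continuity of $f$ must be fixed independently of $a$; this is allowed for $a\ge1$ because the shifted seminorms $\|\cdot\|_{\gamma,\beta,b/a}$ are dominated, uniformly in $a\ge1$, by $e^{\gamma|b|}\|\cdot\|_{\gamma,\beta}$, which is the content of the bound $A(x,b/a)\le e^{\gamma|b/a|}$ established above. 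Once these two uniformities are in hand the remainder estimate holds for each fixed $N$, and the theorem follows.
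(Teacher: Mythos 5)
Your proposal is correct and follows essentially the same route as the paper, whose entire proof of this theorem is the remark ``Similarly as Theorem \ref{1.1}'': you expand against the Taylor polynomial $P_N$, control the remainder through the continuity of $f$ on $\mathscr{P}(\mathbb{R})$ together with the seminorm estimate \eqref{3.2} applied to $\psi_N(\cdot,b/a)\in X_{N+1}$, and let $N$ range over all of $\mathbb{N}$ to obtain the asymptotic series. Your explicit attention to the two uniformity issues (globalizing the Taylor estimate via the $e^{\gamma|x|/2}$ absorption, and fixing $L$ and the seminorm family independently of $a\ge 1$ via $A(x,b/a)\le e^{\gamma|b/a|}$) supplies detail the paper leaves implicit, but it is the same argument.
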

\begin{pro} Similarly as Theorem \ref{1.1}
\end{pro}
\begin{exam}
Let $\psi(x)= (1-x^2) e^{- \frac{x^2}{2}} \in \mathscr{P}(\mathbb{R})$ is Mexican-Hat wavelet and $ f(x) \in \mathscr{P}'(\mathbb{R}) $. Therefore by Theorem \ref{3.3} moment asymptotic expansion of continuous Mexican-Hat wavelet transform for large $a$ in  $\mathscr{P}'(\mathbb{R})$ is given by
\begin{eqnarray*}
\sqrt{a}\bigg\langle f(ax), \psi\left(  x- \frac{b}{a}\right) \bigg\rangle \sim  \sum_{\alpha=0}^{\infty} \frac{\mu_\alpha D^\alpha [(1-x^2) e^{-\frac{x^2}{2}}]_{x=- \frac{b}{a}}}{\alpha ! \,\, a^{\alpha +1/2}} \,\,\, as \,\,\, a \rightarrow \infty.
\end{eqnarray*}
\end{exam}
Case 2. In this case we consider  wavelet $\psi(x) \in \mathscr{P}'(\mathbb{R}) $ and $f(x) \in \mathscr{P}(\mathbb{R}) $. Then the wavelet transform (\ref{1}) can we rewrite as
 \begin{eqnarray*}
\label{2}
\left(  W_\psi f \right)  (a,b) \nonumber &=& \frac{1}{\sqrt{a}}  \bigg\langle \psi\left(  \frac{x}{a}\right),  f(x+b)  \bigg\rangle
\end{eqnarray*}
Similarly as Theorem \ref{33.3} we can also obtain the following theorem
\begin{thm}
\label{3333}
Let $ \psi \in \mathscr{P}'(\mathbb{R}) , f \in \mathscr{P}(\mathbb{R})$  and $\mu_{\alpha}=\langle \psi, x^\alpha \rangle $ be its moment sequence. Then for a fixed $b$ the asymptotic expansion of wavelet transform is
\begin{eqnarray}
\label{3.4}
\frac{1}{\sqrt{a}}  \bigg\langle \psi\left(  \frac{x}{a}\right),  f(x+b)  \bigg\rangle \sim  \sum_{\alpha=0}^{\infty} \frac{\mu_\alpha D^\alpha f(b)a^{\alpha +1/2}}{\alpha ! } \,\,\, as \,\,\, a \rightarrow 0.
\end{eqnarray}
\end{thm}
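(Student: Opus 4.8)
\begin{pro}
The plan is to mirror the proof of Theorem~\ref{33.3}, now with the roles of $\psi$ and $f$ interchanged and the limit $a\rightarrow\infty$ replaced by $a\rightarrow 0$. First I would invoke the scaling rule for distributions: for $a>0$ and any $\phi\in\mathscr{P}(\mathbb{R})$ one has $\langle\psi(x/a),\phi(x)\rangle=a\langle\psi(u),\phi(au)\rangle$, so that with $\phi(x)=f(x+b)$,
\[
\frac{1}{\sqrt{a}}\bigg\langle\psi\left(\frac{x}{a}\right),f(x+b)\bigg\rangle=\sqrt{a}\,\big\langle\psi(u),f(au+b)\big\rangle .
\]
Before using this one should check that $u\mapsto f(au+b)$ indeed belongs to $\mathscr{P}(\mathbb{R})$ for each fixed $a\in(0,1]$ and $b$, so that the right-hand pairing is legitimate; this follows from the less-than-exponential growth of $f$ and all its derivatives, since for $a\le 1$ the factor $e^{\gamma' a|u|}$ is absorbed by a seminorm of slightly larger index $\gamma>\gamma'$.

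Next I would Taylor-expand $f$ about $b$, writing $f(au+b)=Q_N(u)+\rho_{N,a}(u)$ with $Q_N(u)=\sum_{\alpha=0}^{N}\frac{D^\alpha f(b)}{\alpha!}a^\alpha u^\alpha$ and integral remainder $\rho_{N,a}(u)=\frac{a^{N+1}u^{N+1}}{N!}\int_0^1(1-t)^N D^{N+1}f(b+tau)\,dt$. Since $\langle\psi,u^\alpha\rangle=\mu_\alpha$, inserting this decomposition gives
\[
\big\langle\psi(u),f(au+b)\big\rangle=\sum_{\alpha=0}^{N}\frac{\mu_\alpha D^\alpha f(b)}{\alpha!}a^\alpha+\big\langle\psi,\rho_{N,a}\big\rangle ,
\]
and multiplying by $\sqrt{a}$ already produces the partial sums of $(\ref{3.4})$; it remains only to prove that $\langle\psi,\rho_{N,a}\rangle=O(a^{N+1})$ as $a\rightarrow 0$.

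The estimate of this remainder is the crux. Because $\psi\in\mathscr{P}'(\mathbb{R})$ acts continuously, there exist $L>0$ and finitely many indices $\gamma_j>0$, $\beta_j\in\mathbb{N}$ with $|\langle\psi,\phi\rangle|\le L\sum_j\|\phi\|_{\gamma_j,\beta_j}$, so I would bound $\|\rho_{N,a}\|_{\gamma,\beta}$ directly: applying $D^\beta$ in $u$ to $\rho_{N,a}$ leaves the overall factor $a^{N+1}$ (possibly times further positive powers of $a$), a polynomial in $u$, and derivatives of $f$ of order at most $N+1+\beta$ evaluated at points $b+tau$, $t\in[0,1]$. Using $|D^k f(b+tau)|\le C\,e^{\gamma'(|b|+a|u|)}$ for arbitrarily small $\gamma'>0$ and $a\le 1$, the weight $e^{-\gamma|u|}$ with $\gamma>\gamma'$ absorbs this growth and yields $\|\rho_{N,a}\|_{\gamma,\beta}\le C_{N,\beta}\,a^{N+1}$ for all sufficiently small $a$; hence $|\langle\psi,\rho_{N,a}\rangle|=O(a^{N+1})$. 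Putting the pieces together gives $\frac{1}{\sqrt{a}}\langle\psi(x/a),f(x+b)\rangle=\sum_{\alpha=0}^{N}\frac{\mu_\alpha D^\alpha f(b)}{\alpha!}a^{\alpha+1/2}+O(a^{N+3/2})$ as $a\rightarrow 0$, and since $N$ is arbitrary this is exactly the asymptotic series $(\ref{3.4})$. I expect the main obstacle to be precisely this uniform-in-$a$ control of the Taylor tail in the exponentially weighted seminorms — the $a\rightarrow 0$ reversed analogue of Lemma~\ref{6} — and in particular choosing $\gamma$ large enough to dominate the drift $b+tau$ in the argument of $f$.
\end{pro}
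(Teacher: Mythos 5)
Your proposal is correct and follows essentially the route the paper intends: the paper offers no written proof for this theorem beyond the remark that it goes ``similarly as Theorem~\ref{33.3}'', which in turn reduces to the Taylor-polynomial-plus-remainder argument of Theorem~\ref{1.1}, and your change of variables $\langle\psi(x/a),f(x+b)\rangle=a\langle\psi(u),f(au+b)\rangle$ followed by expanding $f$ about $b$ and estimating the Taylor tail in the weighted seminorms is exactly the dual version of that argument. Your write-up is in fact more complete than the paper's, since you make explicit both the verification that $f(au+b)\in\mathscr{P}(\mathbb{R})$ and the uniform-in-$a$ remainder bound that plays the role of Lemma~\ref{6}.
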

\begin{exam}
In this example again we consider the Mexican-Hat wavelet which is less then exponential growth , so by applying Theorem \ref{3333} and using formula [30,pp.320, 1], we get the asymptotic expansion of wavelet transform for small values of $a$
\begin{eqnarray*}
\frac{1}{\sqrt{a}}  \bigg\langle \psi\left(  \frac{x}{a}\right),  f(x+b)  \bigg\rangle  \sim  \sum_{\alpha=0}^{\infty} -2^{\frac{1}{2} (2\alpha-1)} \Gamma\left( \frac{2\alpha + 1}{2}\right) \frac{ D^{2\alpha} f(b)a^{2\alpha +1/2}}{(2\alpha) ! } \,\,\, as \,\,\, a \rightarrow 0.
\end{eqnarray*}
\end{exam}
\section{The moment asymptotic expansion of $\left(  W_\psi f \right)  (a,b) $ as $a\rightarrow \infty $ in the space $\mathcal{O}'_\gamma (\mathbb{R})$ for given $b$ }
A test function $\psi$ belongs to $\mathcal{O}_\gamma (\mathbb{R})$, if it is smooth and $D^\alpha \psi (x) = O (|x|^\gamma) $ as $ x  \rightarrow \infty $  for every $\alpha \in \mathbb{N} $ and $ \gamma \in \mathbb{R} $. The family of seminorms
 \begin{eqnarray*}
\Vert \psi(x) \Vert_{\alpha, \gamma} = sup \lbrace \rho_\gamma(|x|) \vert D^\alpha \psi(x) \vert : x \in \mathbb{R} \rbrace
\end{eqnarray*}
where
\begin{eqnarray}
\label{4.1}
 \rho_\gamma(|x|) =  \left\{\begin{array}{cc}
                 1 ,  & 0 \leq |x| \leq 1 \\
                                    |x|^{-\gamma}, & \,\,\,  |x|>1
                \end{array}\right\},
\end{eqnarray}
generates a topology for $\mathcal{O}_\gamma (\mathbb{R})$. Now with the help of the translation version of $\psi(x)$, we can define the seminorms on $\mathcal{O}_\gamma (\mathbb{R})$ as
\begin{eqnarray*}
\Vert \psi(x) \Vert_{\alpha, \gamma,  b/a} &=& sup \left\lbrace  \rho_\gamma(|x|) \bigg\vert D^\alpha \psi\bigg(x-\frac{b}{a}\bigg) \bigg\vert : x \in \mathbb{R} \right\rbrace  \\ &=& sup \left\lbrace  \rho_\gamma\bigg(\bigg| x-\frac{b}{a} \bigg|\bigg) \bigg\vert D^\alpha \psi\bigg(x-\frac{b}{a}\bigg) \bigg\vert : x \in \mathbb{R} \right\rbrace \nabla (x,b/a) \\ &=& \bigg\Vert \psi\bigg(x-\frac{b}{a}\bigg) \bigg\Vert_{\alpha, \gamma}\nabla (x,b/a)
\end{eqnarray*}
where $ \nabla (x,b/a) = sup \bigg\lbrace \frac{\rho_\gamma(|x|) }{\rho_\gamma(| x-\frac{b}{a}|)}: x \in \mathbb{R} \bigg\rbrace $. \\
 So, for $\gamma >0 $.
 \begin{eqnarray*}
\nabla (x,b/a) \leq \left\{\begin{array}{cccc}
                 1,  & for \,\, 0 \leq |x| \leq 1 \,\,\, and \,\, 0 \leq |x-b/a| \leq 1  \\
                                     \bigg( 1+\frac{|b/a|}{1-|b/a|}\bigg)^\gamma, & for \,\, |x|>1 \,\,\, and \,\, |x-b/a|>1
                                     \\
                                      (1+|\frac{b}{a}|)^\gamma ,  & for \,\, 0\leq |x| \leq 1\,\,\, and \,\, |x-b/a| >1 \\
                                    1, & for \,\, |x|>1 \,\,\, and \,\, |x-b/a| \leq 1
                \end{array}\right\}.
\end{eqnarray*}
Similarly for $\gamma < 0 $. we have
\begin{eqnarray*}
\nabla (x,b/a) \leq \left\{\begin{array}{cc}
                 1,  & for \,\, 0 \leq |x| \leq 1 \,\,\, and \,\, 0 \leq |x-b/a| \leq 1  \\
                                     \bigg( 1+|b/a|\bigg)^{-\gamma}, & otherwise

                \end{array}\right\}.
\end{eqnarray*}
Thus $ \,sup \bigg\lbrace \frac{\rho_\gamma(|x|) }{\rho_\gamma(| x-\frac{b}{a}|)}: x \in \mathbb{R} \bigg\rbrace  \leq \bigg( 1+|b/a|\bigg)^{|\gamma|},  =K < \infty  , \,\,\  \forall \,\,\,  \gamma  \,\, \in \mathbb{R} $\\
Therefore $ \|\psi(x)\|_{\alpha,\,\ \gamma, \,\, b/a}$ are also seminorm on  $\mathcal{O}_\gamma (\mathbb{R})$.These seminorm generate the topology of the space $\psi(x) \in \mathcal{O}_\gamma (\mathbb{R}) $.If
\begin{eqnarray*}
X_q = \lbrace \psi \in \mathcal{O}_\gamma (\mathbb{R}) : D^\alpha \psi(0)=0, \,\, for \,\,\, \alpha < q \rbrace.
\end{eqnarray*}
So for any $\gamma $ we can find a constant $C$ such that
\begin{eqnarray*}
 \rho(|x|) \left| \psi \left( x-\frac{b}{a}\right) \right| \leq C \rho(|x|) \left| x- \frac{b}{a} \right|^{q}  \nabla (x,b/a).
\end{eqnarray*}
If $a>1$
\begin{eqnarray*}
 \rho(|x|) \left| \psi \left( x-\frac{b}{a}\right) \right| \leq  \frac{M}{a^q}
\end{eqnarray*}
Hence using above equation we get
\begin{eqnarray}
\label{4.1}
  \bigg\Vert \psi \left( \frac{x}{a} \right) \bigg\Vert_{\alpha,\,\ \gamma\,\, ,  b/a}= O \left( \frac{1}{a^q}\right) \,\, as \,\, a \rightarrow \infty.
\end{eqnarray}
Similarly as Theorem \ref{33.3} we can obtain the following theorem
\begin{thm}
Let $ \psi \in \mathcal{O}_\gamma (\mathbb{R}) , f \in \mathcal{O}'_\gamma (\mathbb{R}), \,\, N=[[\gamma]]-1$  and $\mu_{\alpha}=\langle f, x^\alpha \rangle $ be its moment sequence. Then for a fixed $b \in \mathbb{R} $ the asymptotic expansion of wavelet transform is
\begin{eqnarray}
\label{4.2}
  \sqrt{a}\bigg\langle f(ax), \psi\left(  x- \frac{b}{a}\right) \bigg\rangle= \sum_{\alpha=0}^{N} \frac{\mu_\alpha D^\alpha \psi(-b/a)}{\alpha ! a^{\alpha +1/2}} + O \bigg( \frac{1}{a^{N+1/2}} \bigg)\,\,\, as \,\,\, a \rightarrow \infty.
\end{eqnarray}
\end{thm}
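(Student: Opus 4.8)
The plan is to run the argument of Theorem~\ref{1.1} (equivalently Theorem~\ref{33.3}) almost verbatim, the one genuinely new feature being that the Taylor expansion may be carried only up to the order $N=[[\gamma]]-1$, because this is the largest degree for which the truncating polynomial is still an admissible test function against $f\in\mathcal{O}'_\gamma(\mathbb{R})$. I would first introduce the degree-$N$ Taylor polynomial at the origin of the translate $\psi(\,\cdot-b/a)$, namely
\begin{eqnarray*}
P_N(x,b/a)&=&\sum_{\alpha=0}^{N}\frac{D^\alpha\psi(-b/a)}{\alpha!}\,x^\alpha,
\end{eqnarray*}
and split the pairing as
\begin{eqnarray*}
\bigg\langle f(ax),\psi\Big(x-\tfrac{b}{a}\Big)\bigg\rangle
&=&\bigg\langle f(ax),P_N(x,b/a)\bigg\rangle+\bigg\langle f(ax),\psi\Big(x-\tfrac{b}{a}\Big)-P_N(x,b/a)\bigg\rangle\\
&=:&S_N(a)+R_N(a).
\end{eqnarray*}

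For the leading part $S_N(a)$ I would use the dilation rule $\langle f(ax),x^\alpha\rangle=a^{-(\alpha+1)}\langle f,x^\alpha\rangle=a^{-(\alpha+1)}\mu_\alpha$, which is legitimate here precisely because each monomial $x^\alpha$ with $\alpha\le N<\gamma$ lies in $\mathcal{O}_\gamma(\mathbb{R})$, so that the moments $\mu_0,\dots,\mu_N$ are defined for $f\in\mathcal{O}'_\gamma(\mathbb{R})$. Multiplying by $\sqrt a$ then reproduces exactly the finite sum $\sum_{\alpha=0}^{N}\mu_\alpha D^\alpha\psi(-b/a)/(\alpha!\,a^{\alpha+1/2})$ appearing in \eqref{4.2}.

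The substance of the proof is the control of $R_N(a)$. By Taylor's theorem the function $\psi_N(x):=\psi(x-b/a)-P_N(x,b/a)$, regarded as a function of $x$, vanishes together with its first $N$ derivatives at the origin, so $\psi_N\in X_{N+1}$; moreover $\psi(\,\cdot-b/a)\in\mathcal{O}_\gamma(\mathbb{R})$ (translation preserves the space, by the bound $\nabla(x,b/a)\le(1+|b/a|)^{|\gamma|}<\infty$ obtained above) and $P_N\in\mathcal{O}_\gamma(\mathbb{R})$ because $N<\gamma$, so $\psi_N\in\mathcal{O}_\gamma(\mathbb{R})$ as well. Applying the dilation rule once more and then the continuity of $f$ on $\mathcal{O}_\gamma(\mathbb{R})$ yields constants $L$ and $q$ with
\begin{eqnarray*}
|R_N(a)|&=&\frac{1}{a}\,\Big|\Big\langle f(x),\psi_N\big(\tfrac{x-b}{a}\big)\Big\rangle\Big|\\
&\le&\frac{L}{a}\sum_{\alpha=0}^{q}\Big\Vert\psi_N\big(\tfrac{x-b}{a}\big)\Big\Vert_{\alpha,\gamma},
\end{eqnarray*}
and the $X_q$-decay estimate of this section, applied with $q=N+1$, then gives $|R_N(a)|=O(a^{-(N+1)})$, whence $\sqrt a\,R_N(a)=O(a^{-(N+1/2)})$. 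Adding this to $\sqrt a\,S_N(a)$ gives \eqref{4.2}.

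The step I expect to be the main obstacle is the bookkeeping forced by the cutoff $N=[[\gamma]]-1$: one must check simultaneously that $P_N$ is an admissible test function for $f$ (which needs $N<\gamma$, so $\mu_0,\dots,\mu_N$ exist), that $\psi_N$ remains in $\mathcal{O}_\gamma(\mathbb{R})\cap X_{N+1}$, and --- because the polynomial growth of elements of $\mathcal{O}_\gamma(\mathbb{R})$ limits how fast their rescalings can decay --- that one genuinely cannot push the expansion beyond order $N$, so that here the series terminates with an honest power-law remainder $O(a^{-(N+1/2)})$, in contrast with the full asymptotic series of Theorem~\ref{33.3} in $\mathscr{P}'(\mathbb{R})$. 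The remaining uniformity issue is harmless, since $b/a\to0$, so the Taylor coefficients $D^\alpha\psi(-b/a)\to D^\alpha\psi(0)$ stay bounded and the factor $\nabla(x,b/a)\le(1+|b|)^{|\gamma|}$ is bounded for $a\ge1$; apart from this the argument is identical to the proof of Theorem~\ref{1.1}.
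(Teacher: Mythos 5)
Your proposal follows exactly the route the paper intends: the paper gives no proof for this theorem beyond the remark ``Similarly as Theorem \ref{33.3}'' (which in turn defers to Theorem \ref{1.1}), and your argument is precisely that Taylor-polynomial decomposition, with the remainder placed in $X_{N+1}\cap\mathcal{O}_\gamma(\mathbb{R})$ and controlled by the seminorm estimate $\Vert\cdot\Vert_{\alpha,\gamma,b/a}=O(a^{-q})$ established earlier in the section. In fact you supply details the paper omits entirely, most usefully the observation that the cutoff $N=[[\gamma]]-1$ is forced by the requirement that the monomials $x^\alpha$ with $\alpha\le N<\gamma$ be admissible test functions so that the moments $\mu_0,\dots,\mu_N$ exist.
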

Since $ \mathcal{O}'_c (\mathbb{R}) =\bigcap  \mathcal{O}'_\gamma (\mathbb{R}) $, we obtain the asymptotic expansion of wavelet transform in the space $ \mathcal{O}'_c (\mathbb{R}) $
\begin{thm}
Let $ \psi \in \mathcal{O}_c (\mathbb{R}) , f \in \mathcal{O}'_c (\mathbb{R}) $  and $\mu_{\alpha}=\langle f, x^\alpha \rangle $ be its moment sequence. Then for a fixed $b \in \mathbb{R} $ the asymptotic expansion of wavelet transform is
\begin{eqnarray}
\label{4.3}
  \sqrt{a}\bigg\langle f(ax), \psi\left(  x- \frac{b}{a}\right) \bigg\rangle \sim \sum_{\alpha=0}^{\infty} \frac{\mu_\alpha D^\alpha \psi(-b/a)}{\alpha ! a^{\alpha +1/2}} + O \bigg( \frac{1}{a^{N+1/2}} \bigg)\,\,\, as \,\,\, a \rightarrow \infty.
\end{eqnarray}
\end{thm}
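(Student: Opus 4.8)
\section*{Proof proposal for the $\mathcal{O}'_c(\mathbb{R})$ theorem}

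The plan is to obtain this statement as the limiting case $\gamma\to\infty$ of the preceding theorem in the scale of spaces $\mathcal{O}'_\gamma(\mathbb{R})$. The starting point is the identity $\mathcal{O}'_c(\mathbb{R})=\bigcap_{\gamma\in\mathbb{R}}\mathcal{O}'_\gamma(\mathbb{R})$ recorded just before the statement, together with the corresponding fact that a test function $\psi\in\mathcal{O}_c(\mathbb{R})$ lies in $\mathcal{O}_\gamma(\mathbb{R})$ for every admissible $\gamma$; in particular the pairing $\langle f(ax),\psi(x-b/a)\rangle$ is well defined and every moment $\mu_\alpha=\langle f,x^\alpha\rangle$ exists. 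Thus for each choice of $\gamma$ the hypotheses of the $\mathcal{O}'_\gamma$ theorem are met, and the seminorm estimate $\|\psi(x/a)\|_{\alpha,\gamma,b/a}=O(a^{-q})$ established above ($\psi\in X_q$) is available.

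First I would fix an arbitrary non-negative integer $N$ and select $\gamma$ so that $[[\gamma]]-1=N$ (this is possible since $[[\gamma]]$ runs through all positive integers as $\gamma$ increases). Because $f\in\mathcal{O}'_\gamma(\mathbb{R})$ and $\psi\in\mathcal{O}_\gamma(\mathbb{R})$ for this $\gamma$, the preceding theorem applies verbatim and gives
\begin{equation*}
\sqrt{a}\,\bigg\langle f(ax),\psi\Big(x-\tfrac{b}{a}\Big)\bigg\rangle=\sum_{\alpha=0}^{N}\frac{\mu_\alpha D^\alpha\psi(-b/a)}{\alpha!\,a^{\alpha+1/2}}+O\!\left(\frac{1}{a^{N+1/2}}\right),\qquad a\to\infty .
\end{equation*}
Next I would observe that the coefficient data $\mu_\alpha D^\alpha\psi(-b/a)/\alpha!$ do not depend on the auxiliary parameter $\gamma$, so the finite expansions produced for different $N$ are mutually consistent and assemble into one formal series $\sum_{\alpha\ge0}\mu_\alpha D^\alpha\psi(-b/a)/(\alpha!\,a^{\alpha+1/2})$. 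By the very definition of an asymptotic power series, the displayed estimate holding for every $N$ is exactly the assertion that this series is an asymptotic expansion of $\sqrt{a}\langle f(ax),\psi(x-b/a)\rangle$ as $a\to\infty$; the trailing remainder $O(a^{-N-1/2})$ in the statement is simply the error left after the $N$-th partial sum. This completes the argument.

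I expect the only real obstacle to be the bookkeeping of the first step: one must be certain that $f\in\mathcal{O}'_c(\mathbb{R})$ and $\psi\in\mathcal{O}_c(\mathbb{R})$ genuinely place the pair in $\mathcal{O}'_\gamma(\mathbb{R})\times\mathcal{O}_\gamma(\mathbb{R})$ for the chosen $\gamma$, and that the seminorm bounds invoked in the $\mathcal{O}'_\gamma$ proof are uniform enough to survive the choice $\gamma=\gamma(N)$ for each $N$. Once that is granted, no new analysis is required — everything is inherited from the $\mathcal{O}'_\gamma$ case. I would also add the remark that, in contrast to the situation in $\mathscr{E}'(\mathbb{R})$ or $\mathscr{P}'(\mathbb{R})$, the series here need not converge: it is asymptotic only, which is why the conclusion is stated with the symbol $\sim$.
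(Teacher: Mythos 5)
Your proposal is correct and follows essentially the same route as the paper, which proves this theorem only by the one-line observation that $\mathcal{O}'_c(\mathbb{R})=\bigcap_\gamma\mathcal{O}'_\gamma(\mathbb{R})$ and then invokes the preceding $\mathcal{O}'_\gamma$ theorem with $N=[[\gamma]]-1$ allowed to grow; your write-up simply makes that limiting argument explicit. The only minor caveat is that $\psi\in\mathcal{O}_c(\mathbb{R})$ need only lie in $\mathcal{O}_\gamma(\mathbb{R})$ for all sufficiently large $\gamma$ (the spaces increase with $\gamma$), which is all your argument actually uses.
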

\section{The moment asymptotic expansion of $\left(  W_\psi f \right)  (a,b) $ as $a\rightarrow \infty $ in the space $\mathcal{O}'_M (\mathbb{R})$ for given $b$ }
The space $\mathcal{O}_M (\mathbb{R})$ consist of all $ c^\infty $-function whose derivatives are bounded by polynomials ( of probably different degrees). Let $ \psi \in \mathcal{O}_M (\mathbb{R}) $ then its translation version is also in $\mathcal{O}_M (\mathbb{R})$. Then by using Theorem 9 \cite{kanwal} we can also derive the asymptotic expansion of wavelet transform in $\mathcal{O}'_M (\mathbb{R})$
\begin{thm}
Let $ \psi \in \mathcal{O}_M (\mathbb{R}) , f \in \mathcal{O}'_M (\mathbb{R}) $  and $\mu_{\alpha}=\langle f, x^\alpha \rangle $ be its moment sequence. Then for a fixed $b \in \mathbb{R} $ the asymptotic expansion of wavelet transform is
\begin{eqnarray}
\label{5.1}
 \langle f(x), \psi_{a,b}(x)\rangle \sim \sum_{\alpha=0}^{\infty} \frac{\mu_\alpha (f) D^{\alpha}\psi(-\frac{b}{a})}{\alpha ! \,\,\,  a^{\alpha +1/2}} \,\,\, as \,\,\, a \rightarrow \infty.
\end{eqnarray}
\end{thm}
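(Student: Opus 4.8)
The plan is to repeat, almost verbatim, the Taylor-polynomial argument used for Theorems \ref{1.1} and \ref{33.3}, replacing the seminorm estimates of $\mathscr{E}(\mathbb{R})$ and $\mathscr{P}(\mathbb{R})$ by the corresponding ones for $\mathcal{O}_M(\mathbb{R})$. First I would use the dilation identity $\langle f(x),\psi_{a,b}(x)\rangle=\frac{1}{\sqrt a}\langle f(x),\psi((x-b)/a)\rangle=\sqrt a\,\langle f(ax),\psi(x-b/a)\rangle$, so that the problem is reduced to expanding the pairing $\langle f(ax),\psi(x-b/a)\rangle$ just as in the earlier sections. Fix $N$ and let $P_N(x,b/a)=\sum_{\alpha=0}^{N}\frac{D^\alpha\psi(-b/a)}{\alpha!}x^\alpha$ be the $N$-th Taylor polynomial about $x=0$ of the translated wavelet $x\mapsto\psi(x-b/a)$, and put $\psi_N(x,b/a)=\psi(x-b/a)-P_N(x,b/a)$, which by construction lies in $X_{N+1}$. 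Pairing with $f(ax)$ and using $\langle f(ax),x^\alpha\rangle=a^{-\alpha-1}\mu_\alpha$ gives
\[
\langle f(ax),\psi(x-b/a)\rangle=\sum_{\alpha=0}^{N}\frac{\mu_\alpha D^\alpha\psi(-b/a)}{\alpha!\,a^{\alpha+1}}+R_N(a),\qquad R_N(a)=\langle f(ax),\psi_N(x,b/a)\rangle,
\]
so after multiplying by $\sqrt a$ the finite sum already has the shape of the right-hand side of \eqref{5.1}.

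The substantive step is the remainder estimate. Because $\psi_N(\cdot,b/a)\in X_{N+1}$ and $-b/a\to0$, one expects the $\mathcal{O}_M(\mathbb{R})$-seminorms of its dilate to decay like $a^{-(N+1)}$, i.e. the analogue of Lemma \ref{6} and of the decay estimates \eqref{3.1}--\eqref{3.2} and of the corresponding estimate proved in $\mathcal{O}_\gamma(\mathbb{R})$. This is precisely what Theorem 9 of \cite{kanwal} supplies: the translation map is continuous on $\mathcal{O}_M(\mathbb{R})$ — one checks, as in the $\mathcal{O}_\gamma$ computation above, that the polynomial weights controlling $\psi(x-b/a)$ differ from those for $\psi$ only by a factor depending on $b$ and on the degree — and the vanishing condition $|\psi_N(y,b/a)|\le C|y|^{N+1}$ near $0$, combined with the identity $D^\beta[\psi_N(x/a,b/a)]=a^{-\beta}(D^\beta\psi_N)(x/a,b/a)$, produces a gain of $a^{-(N+1)}$ against each such weight. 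Feeding this into the continuity of $f\in\mathcal{O}'_M(\mathbb{R})$, which provides a constant $L$ and finitely many seminorms with $|R_N(a)|\le L\sum\|\psi_N(\cdot,b/a)\|_{\text{seminorm}}$, yields $R_N(a)=O(a^{-(N+1)})$ and hence $\sqrt a\,R_N(a)=O(a^{-(N+1/2)})$.

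Putting the two displays together gives, for each $N$,
\[
\langle f(x),\psi_{a,b}(x)\rangle=\sum_{\alpha=0}^{N}\frac{\mu_\alpha D^\alpha\psi(-b/a)}{\alpha!\,a^{\alpha+1/2}}+O\Big(\frac{1}{a^{N+1/2}}\Big),\qquad a\to\infty,
\]
and since $N$ is arbitrary this is exactly the assertion that the series in \eqref{5.1} is the asymptotic expansion of $\langle f(x),\psi_{a,b}(x)\rangle$ in the Poincaré sense. I expect the only genuine obstacle — everything else being a transcription of the earlier proofs — to be the verification of the seminorm decay in $\mathcal{O}_M(\mathbb{R})$: unlike in $\mathscr{E}$, $\mathscr{P}$ or $\mathcal{O}_\gamma$, the weights controlling successive derivatives are polynomials of \emph{a priori} different and unbounded degree, so one must ensure that both the continuity of translation and the $X_{N+1}$ gain hold uniformly across the whole defining family of seminorms; this bookkeeping is what the cited Theorem 9 of \cite{kanwal} handles.
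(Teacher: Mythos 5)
Your argument is a genuinely different route from the paper's. The paper never works with the Taylor remainder in $\mathcal{O}_M(\mathbb{R})$ at all: it uses the Parseval relation (1.7.1) of \cite{rspathak} to rewrite $\sqrt{a}\langle f(ax),\psi(x-\frac{b}{a})\rangle$ as $\sqrt{a}\langle e^{ib\omega}\hat{f}(\omega),\hat{\psi}(a\omega)\rangle$, notes that the Fourier transform carries $\mathcal{O}_M(\mathbb{R})$ into $\mathcal{O}'_c(\mathbb{R})$ and $\mathcal{O}'_M(\mathbb{R})$ into $\mathcal{O}_c(\mathbb{R})$, applies the already-established $\mathcal{O}'_c$-expansion of Section 4 to the dilated distribution $\hat{\psi}(a\omega)$ paired with the test function $e^{ib\omega}\hat{f}(\omega)$, and then converts $\mu_\alpha(e^{-i\frac{b}{a}\omega}\hat{\psi})=i^{-\alpha}D^{\alpha}\psi(-\frac{b}{a})$ and $D^{\alpha}\hat{f}(0)=i^{\alpha}\mu_\alpha(f)$ back to obtain \eqref{5.1}. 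What that detour buys is exactly the step you defer: in $\mathcal{O}_M(\mathbb{R})$ the generating seminorms are $\sup_x|\phi(x)D^{\beta}\psi(x)|$ with $\phi$ ranging over rapidly decreasing weights, and since the polynomial bounds on successive derivatives of $\psi$ may have unbounded degrees, there is no single weight $\rho_\gamma$ against which to run the $X_{N+1}$ gain as in Sections 2--4; the Fourier argument sidesteps this entirely.

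The one weak point in your write-up is that you delegate precisely this seminorm estimate to ``Theorem 9 of \cite{kanwal}.'' That theorem is the moment asymptotic expansion in $\mathcal{O}'_M(\mathbb{R})$ itself (and Estrada--Kanwal prove it by the same Fourier-duality reduction to $\mathcal{O}'_c$), not a lemma about translation continuity or remainder decay, so invoking it for the ``bookkeeping'' is essentially circular. Your direct route can nevertheless be completed by hand: for $a\geq 1$ the shift $b/a$ stays in the compact set $[-|b|,|b|]$, Taylor's theorem with integral remainder gives $|D^{\beta}\psi_N(y,b/a)|\leq C|y|^{N+1-\beta}$ for $|y|\leq 1$ and $|D^{\beta}\psi_N(y,b/a)|\leq C|y|^{d}$ for $|y|\geq 1$, with $C$ and $d$ independent of $a$, and since every weight $\phi\in\mathscr{S}(\mathbb{R})$ absorbs $|x|^{d}$ one obtains $\sup_x|\phi(x)D^{\beta}[\psi_N(x/a,b/a)]|=O(a^{-(N+1)})$ uniformly over the finitely many seminorms furnished by the continuity of $f$. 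If you write that estimate out, your proof is a more self-contained and elementary alternative to the paper's; as it stands, the step you yourself identify as the only obstacle is the entire content of the theorem and cannot be outsourced to \cite{kanwal}.
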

\begin{proof}
By using (1.7.1) \cite{rspathak}  we can be write the wavelet transform
\begin{eqnarray}
\label{5.2}
\sqrt{a}\bigg\langle f(ax), \psi\left(  x- \frac{b}{a}\right) \bigg\rangle =  \sqrt{a} \langle e^{ib\omega}\hat{f}(\omega), \hat{\psi}(a\omega)\rangle
\end{eqnarray}
 where $\psi(x) \in \mathcal{O}_M (\mathbb{R}) $ and $ f(x) \in \mathcal{O}'_M (\mathbb{R})$ then its Fourier transforms  $\hat{\psi}(\omega) \in \mathcal{O}'_c (\mathbb{R}) $ and $ \hat{f}(\omega) \in \mathcal{O}_c (\mathbb{R})$ respectively.\\
Now by using Theorem 4.2 we get
\begin{eqnarray}
\label{5.3}
\langle f(x), \psi_{a,b}(x)\rangle \sim \sum_{\alpha=0}^{\infty} \frac{\mu_\alpha ( e^{-i\frac{b}{a}\omega}\hat{\psi}(\omega)) D^\alpha ( \hat{f}(0)}{\alpha ! \,\,\,  a^{\alpha +1/2}} \,\,\, as \,\,\, a \rightarrow \infty.
\end{eqnarray}
But by the properties of Fourier transform we have
\begin{eqnarray*}
\mu_\alpha (e^{-i{b}{a}\omega} \hat{\psi}(\omega)) =  \langle e^{-i\frac{b}{a}\omega} \hat{\psi}(\omega), \omega^\alpha \rangle = i^{-\alpha} D^{\alpha}\psi\bigg(-\frac{b}{a}\bigg), \,\,\,
D^\alpha ( \hat{f}(\omega))_{\omega=0} = i^{\alpha}  \mu_\alpha (f(x))
\end{eqnarray*}
and hence
\begin{eqnarray}
\label{5.4}
\langle f(x), \psi_{a,b}(x)\rangle \sim \sum_{\alpha=0}^{\infty} \frac{\mu_\alpha (f) D^{\alpha}\psi(-\frac{b}{a})}{\alpha ! \,\,\,  a^{\alpha +1/2}} \,\,\, as \,\,\, a \rightarrow \infty.
\end{eqnarray}
\end{proof}
\section*{Acknowledgment}
The work of second author is supported by U.G.C. start-up grant.
\thebibliography{00}
\bibitem{Erdelyi} Erde'lyi, A., W. Magnus, F. Oberhettinger and F. G. Tricomi, Tables of Integral Transforms, Vol. 1. McGraw-Hill, New York (1954).
\bibitem{I} I. Daubechies, Ten Lectures of Wavelets, SIAM, Philadelphia. 1992.

\bibitem{rspathak} R . S. Pathak. The Wavelet transform.  Atlantis Press/World Scientific, 2009.

\bibitem{Pathak} R S Pathak and Ashish Pathak. Asymptotic expansion of Wavelet Transform for small value a,  arXiv:submit/0949812 [math.FA] 4 Apr 2014.
    
\bibitem{APathak} R S Pathak and Ashish Pathak. Asymptotic expansion of Wavelet Transform with error term,   arXiv:submit/0949812 [math.FA] 4 Apr 2014.
    
\bibitem{Aapathak} R S Pathak and Ashish Pathak. Asymptotic Expansions of the Wavelet Transform for Large and Small Values of b, Int. Jou. of Math. and Mathematical Sciences, 2009, 13 page.
    
\bibitem{kanwal} R.Estrada , R. P. Kanwal. A distributional theory for asymptotic expansions.  Proc. Roy. Soc. London Ser. A 428 (1990), 399-430.
    
\bibitem{Sch} L. Schwartz. Th$\acute{e}$orie des Distributions, Hermann, Paris, 1966.

\bibitem{J} J Horv$\acute{a}$th,Topological Vector Spaces and Distributions.  Vol. I Addison-Wesley, Reading, MA (1966)

\bibitem{wong}  R. Wong.  Explicit error terms for asymptotic expansion of Mellin convolutions.  J. Math. Anal. Appl. 72(1979), 740-756.
\end{document}